\documentclass[]{siamart1116}
\usepackage{enumitem}
\usepackage{amssymb}
\usepackage{amsmath}
\usepackage[a4paper,margin=1in]{geometry}
\usepackage{graphicx}
\usepackage{tikz}

\title{Leveraging the Cayley Hamilton Theorem For Efficiently Solving The Jordan Canonical Form Problem}
\author{Author: Lloyd Nesbitt \\ email: xyz333math@gmail.com}

\begin{document}

	\maketitle

	\begin{abstract}
		Given an $n \times n$ nonsingular matrix and it's characteristic polynomial as the starting point, we will leverage the Cayley-Hamilton Theorem to efficiently calculate the maximal length Jordan Chains for each distinct eigenvalue of the matrix.  Efficiency and speed are gained by seeking a certain type of starting vector as the 1st step of the algorithm.  The method for finding this starting vector does not require calculating the $ker(A-\lambda I)^k$ which is quite an expensive operation, and which is the usual approach taken in solving the Jordan Canonical basis problem.  Given this starting vector, all remaining vectors in the Jordan Chain are calculated very quickly in a loop.  The vectors comprising the Jordan Chains will then be used to minimize the amount of equation solving in order to find the remaining generalized eigenvector basis.  We will prove a theorem that justifies why the resulting Jordan Chains are of maximal length.  We will also justify how to derive the starting vector which will subsequently be used to calculate the Maximal Jordan Chains.
	\end{abstract}

	\section{Introduction}
	We present an algorithm and a set of techniques for solving the Jordan Canonical Form problem that is beneficial in two ways:
	\begin{enumerate}[label=\arabic*.]
		\item enhancing and improving the experience of learning Jordan Canonical Form problem solving in a pedagogical setting and
		\item providing an efficient algorithm to solve for a Jordan Canonical basis for both manual problem solving as well as implementations in software.
	\end{enumerate}

	It is nearly universally agreed that understanding Jordan Canonical Form, from a theoretical point of view, as well as a practical step by step approach to solving problems, is an essential part of a college math student's curriculum and training.  There is good reason for that as the Jordan Canonical Form problem greatly enhances one's grasp of many important fundamental concepts in Linear Algebra.
	
	Many standard math text books contain a description of the steps required for manually solving the Jordan Canonical Form problem \cite{axler2015linear} \cite{hungerford1974verlag} \cite{lipschutz1968schaum} \cite{neringevard}.  We claim that the algorithm presented here offers a novel approach to solving Jordan Canonical Form problems which strikes a balance of both efficiency and simplicity.
	
	In a classroom setting, for reasons of practicality, solving Jordan Canonical Form problems manually in a reasonable amount of time  necessitates limiting the matrix size to 4x4 or less due to the intermediate calculations involved. Even a 4x4 is almost too big as the act of squaring a 4x4 matrix manually is very arduous requiring over 100 combined multiplications and additions.  This becomes an issue with Jordan Canonical Form problems: smaller sized matrices are too trivial to cover all aspects of Jordan Canonical problem solving techniques and larger matrices become too unwieldy to do manually.  
	
	We make the following claims about the algorithm presented herein.
	\begin{enumerate}[label=\arabic*.]
	\item It greatly reduces the amount of tedious calculations involved, thus opening up a wider range Jordan Canonical Form problems that can be reasonably assigned to students for homework and tests.
	\item It guides the problem solver to think about and focus on the most important concepts of Jordan Canonical Form problems.  In particular, emphasis is placed on the generalized Eigenspaces and their basis consisting of Jordan Chains.
	\end{enumerate}
	  
	The novelty of our method and the most important part is what we refer to as steps A.1 and A.2 below.  These steps are the first steps performed and the outcome is that all the maximal length Jordan Chains get calculated quickly and without solving any equations.  Furthermore, for matrices that have many larger Jordan blocks relative to Jordan blocks of size one, this outcome constitutes the bulk of the necessary calculations needed to completely solve the Jordan Normal Form problem.  These first two steps are justified by Theorem 5, which we refer to as the "Main Theorem", and Theorem 6 given below.  The remaining steps of the algorithm are a well orchestrated usage of well known and well understood ideas.  A very brief mention of the core idea of this paper is made on mathoverflow.net \cite{mathoverflowcayleyhamilton} which points out how the Cayley-Hamilton theorem comes into play and how eigenvectors can be found without solving any equations.  We have taken those core ideas and greatly expanded the details and provided important justification via Theorem 5 and 6 below, as well as provided an algorithm to completely solve for a Jordan Canonical basis.

	This algorithm is designed specifically to solve non-diagonalizable matrices.  For linear operators that are diagonalizable, it will not offer any advantages over other known diagonalization algorithms.  The more complex and the larger the generalized eigen spaces are the better this algorithm will perform.  Or to rephrase, if the exponents of the linear factors of the minimal polynomial are strictly greater than 1, then this algorithm will excel in that situation.  The larger the exponents, the better the performance.
	
	\begin{keywords}
		Jordan Normal Form, Jordan Canonical Form, Generalized Eigen Space, Jordan Chain, Cayley Hamilton
	\end{keywords}

	\section{Overview}
	Given a non-singular linear transformation over a vector space V represented by an $n \times n$ matrix A over the field of complex numbers and it's characteristic polynomial in factored form $(x-\lambda_{1})^{j_1}(x-\lambda_{2})^{j_2}\cdots(x-\lambda_{m})^{j_m}$, the goal is to efficiently solve the "Jordan Canonical Form" by starting with vectors in the generalized eigenspace that are as far away from the eigenvectors as possible.  By "far away" we mean that we seek a generalized eigenvector which is in the kernel of $(A - \lambda_k I)^r$ but not in the kernel of $(A - \lambda_k I)^{r-1}$ where $r$ is the greatest integer such that $r \le j_k$.  If $(r \ge 2)$ and we denote the starting vector as $\overrightarrow{w}_r$ then a Jordan Chain corresponding to $\lambda_k$ is obtained by calculating $\overrightarrow{w}_{j-1} = (A - \lambda_k I)\overrightarrow{w}_j$ for $j=r \cdots 2$. Additionally we want to obtain the starting vectors as quickly as possible with minimal calculations.

	The process can be broken down into two major groups of steps.  The first group of steps is to find the maximal Jordan Chains of each distinct eigenvalue.  The second group of steps is to find all the smaller (non-maximal) Jordan Chains in a way that uses the basis vectors already found from first steps to minimize the calculations involved.

	\section{Notation and Terminology}
	$\textbf{A}\equiv$ $n \times n$ nonsingular matrix over $\mathbb{C}$.  Note that throughout this article we are always talking about the same linear transformation T with the representation of T by the $n \times n$ matrix A.  We may refer to A (rather than T) as the linear transformation.

	$\textbf{G}_A(\lambda_k)\equiv$ The generalized eigenspace of the linear tranformation A corresponding to $\lambda_k$.  I.e. it is the subspace of V consisting of all vectors $\overrightarrow{v} \in V$ such that $(A-\lambda_k I)^{j_k} \overrightarrow{v} = 0$ for some integer $j_k$

	$\textbf{G}_A(\lambda_k, i)\equiv$ The kernel of $(A-\lambda_k I)^i$.

	\textbf{Generalized Eigenvector} $\equiv$ Any vector in V which is in $ker(A-\lambda_i I)^k$ for some eigenvalue $\lambda_i$ and some positive integer $k$.  Note that we include an eigenvector where $k=1$ as being a member of the set of generalized eigenvectors.

	\textbf{Jordan Chain} $\equiv$ an ordered sequence of vectors belonging to a single generalized eigen subspace $G_A(\lambda_i)$ relative to a linear operator with matrix A as follows: using a starting vector $\overrightarrow{v}_0 \in G_A(\lambda_i)$ and then obtaining a sequence of non-zero vectors $\overrightarrow{v}_0, (A-\lambda_i I)^1\overrightarrow{v}_0, \dots, (A-\lambda_i I)^{r-1}\overrightarrow{v}_0, (A-\lambda_i I)^r\overrightarrow{v}_0$ where $(A-\lambda_i I)^{r+1}\overrightarrow{v}_0=\overrightarrow{0}$. Note that to form the Jordan transformation matrix P, the sequence of vectors are placed in the reverse order as column vectors of P.

	\textbf{Maximal Jordan Chain} $\equiv$ \textbf{Maximal Length Jordan Chain} $\equiv$ A Jordan Chain as defined above that has maximal length for the given eigenvalue.  I.e. any other Jordan Chain for $\lambda_i$ must have length $\leq$ the length of a Maximal Jordan Chain for $\lambda_i$.

	$\textbf{P} \equiv$ a change of basis matrix such that $P^{-1} A P=J$ where $J$ is a matrix in Jordan Normal form.  In this paper we also use $P_i$ and $J_i$ to denote "intermediate" matrices, formed during the algorithm, which consist of block matrices in which part of the matrix $J_i$ consists of Jordan Blocks and part of the matrix $J_i$ is something other than a Jordan Block.  And similarly for $P_i$ part of the matrix is generalized eigenvectors as columns and part of the $P_i$ are columns other than generalized eigenvectors.

	$\overrightarrow{\textbf{e}}_i$ the standard basis vector.  E.g. in $\mathbb{C}^4$, $\overrightarrow{e}_3 = (0, 0, 1, 0)$.

	\section{Algorithm step by step}
	Starting assumptions:
	\begin{enumerate}[label=assumption \arabic*:]
		\item Given an $n \times n$ non-singular matrix A over $\mathbb{C}$ \item Given the characteristic polynomial $(x-\lambda_{1})^{j_1}(x-\lambda_{2})^{j_2}\cdots(x-\lambda_{m})^{j_m}$
		\item None of the generalized eigen vectors of A are standard basis vectors $\overrightarrow{e}_i$.  Note that this is a special case which can easily be dealt with, but it becomes a nusaiance to the main algorithm if we allow it.
	\end{enumerate}
	Then for each distinct eigenvalue $\lambda_i\in \{\lambda_1, \lambda_2,\cdots,\lambda_m \}$ perform the following steps:

	\begin{enumerate}[label=Step A.\arabic*:]
		\item choose a random vector $\overrightarrow{w} \in V$ with "calculation friendly" values.  A good choice is $\overrightarrow{w} = (1,1,\cdots,1)$ or $\overrightarrow{w}=(1, -1, 1, -1, \dots, (-1)^{n-1})$.  The important point about selecting $\overrightarrow{w}$ is that it has a non-zero coefficient of every basis vector.  See Theorem 6 below for justification as to why a purely random choice of vectors almost always works.  Note: if we are returning to this step from Step A.5.b below then the starting vector $\overrightarrow{w}$ must be linearly independent from any previous starting vectors for the given eigenvalue as well as linearly independent from all generalized eigen basis vectors previously found for the given eigenvalue $\lambda_i$.  See Step C.1 for details as to how to proceed in this scenario.
		\item for each $\lambda_k \neq \lambda_i$ remove the $G_A(\lambda_k)$ component from $\overrightarrow{w}$ via $\overrightarrow{w} = (A-\lambda_k I)^{j_k} \overrightarrow{w}$.  See implementation notes below.
		\item at the end of step 2 we will have a non-zero vector $\overrightarrow{w} \in G_A(\lambda_i)$ unless we made a bad initial guess for $\overrightarrow{w}$ that did not contain any component in $G_A(\lambda_i)$.  Calculate a Jordan Chain as follows: let $\overrightarrow{g}_0=\overrightarrow{w}, \overrightarrow{g}_1=(A-\lambda_i I)\overrightarrow{g}_0, \cdots, \overrightarrow{g}_h=(A-\lambda_i I)\overrightarrow{g}_{h-1}$ where h is an integer such that $\overrightarrow{g}_h$ is non-zero but $(A-\lambda_i I)\overrightarrow{g}_h = 0$.  Note: we will prove below that with a good starting vector $\overrightarrow{w}$ this is a maximal length Jordan Chain.
		\item save the Jordan Chain vectors from step 3 in a list $\mathcal{L}_i=\{\overrightarrow{g}_h, \cdots \overrightarrow{g}_0\}$ which we will be using later on to form the P matrix.  Note that we listed the vectors $\overrightarrow{g}_i$ in reverse order because that is the order they will be placed in the P matrix with the eigenvector on the left.  Now if $\lambda_i$ was not the last eigenvalue then move to the next $\lambda_{i+1}$ and go to step 1 and repeat all steps for $\lambda_{i+1}$.  Otherwise move on to step 5.
		\item Determine the next step among the following three choices: (i) we are finished.  (ii) return to Step A.1 or  (iii) go to Steps B.1 - B.5.  
		\begin{enumerate}[label=(\roman*)]
			\item If the Maximal Jordan blocks found so far span the entire space, then we have a complete generalized eigen basis and we are done.  Just form the P matrix with the basis vectors found and optionally compute $P^{-1}$.
			\item If the number of generalized eigen basis vectors found for the current eigenvalue $\lambda_i$ is less than or equal to half the exponent of the corresponding linear factor in the characteristic polynomial, then return to Step A.1 but take care to choose a different starting vector $\overrightarrow{w}$ that is linearly independent from the previously used $\overrightarrow{w}$ as well as all Jordan Basis vectors found so far for $\lambda_i$.  As an example, suppose 5 is an eigenvalue and the characteristic polynomial has a factor of $(x-5)^7$ and we have a Jordan Chain of length 3 found so far.  Then that means that there are 7 - 3 = 4 remaining generalized eigenvectors to be found corresponding to eigenvalue 5.  In particular there might be another maximal length Jordan Chain of length 3.  I.e. the sizes of the remaining Jordan Blocks could be any one of the following four possibilities: $ \{3, 1\}, \{2, 2\}, \{2, 1, 1\}, \{1, 1, 1, 1\} $.  We don't know which one of these four possibilities it will be.  But we should definitely try to find the possible block of size 3 if it exists by the technique of the A-Steps.  If there is not another block of size 3 then it will become apparant in step A.3.  See Step C. for details.
			\item If all the maximal length Jordan Chains have been found for all the eigenvalues $\lambda_i$ and if there are one or more $ i $ for which the basis vectors found so far do not span $G(\lambda_i)$, then we have more generalized eigenvectors to find for $\lambda_i$.  If that is the case then move to the next major part of the algorithm: Steps B.1 - B.4.
		\end{enumerate}
		
	\end{enumerate}

	\begin{enumerate}[label=Step B.\arabic*:]
		\item \textit{The goal of the remaining steps is to find all generalized eigen basis vectors corresponding to Jordan Blocks that have not yet been found above.  And we want to do that in an optimal way that leverages all the generalized eigen basis vectors found so far.}
		\newline
		Initialize an intermediate P-transition matrix $P_1$ as follows: form a Jordan transition matrix with the generalized eigenvectors found above as the $ 1^{st} $ r columns where r is the total number of vectors found above and $r < n$.  Then to complete an $n \times n$ matrix $P_1$ fill the remaining column vectors on the far right with n - r standard basis vectors so that the lower right hand block of $P_1$ is an identity matrix of size $(n-r) \times (n-r)$.  Also initialize a partial Jordan matrix $J_1$ with an upper left-hand block matrix $J_{B_1}$ which is in Jordan normal form corresponding to the partial Jordan basis vectors $\overrightarrow{g}_1,\cdots ,\overrightarrow{g}_r$.  And the remaining right-hand columns of $J_1$ are vectors which we must now solve for based on an equation which we will explain below.
		\newline
		what is going on?
		\begin{center}
			\begin{tikzpicture}[scale=0.9]
			\node at (0, 0.5) {\Large $P_1=$};
			\draw (0.7 + 0.1,0) -- (0.7 + 0,0) -- (0.7 + 0.0,1) -- (0.7 + 0.1,1);
			\node at (0.7 + 0.5,0.5) {\LARGE $\beta_1$};
			\draw[dashed] (0.7 + 1,0) -- (0.7 + 1,1);
			\node at (0.7 + 1.3, 0.75) {\Large 0};
			\draw[dashed] (0.7 + 1,0.4) -- (0.7 + 1.6,0.4);
			\node at (0.7 + 1.3, 0.16) {I};
			\draw (0.7 + 1.5,1) -- (0.7 + 1.6,1) -- (0.7 + 1.6,0) -- (0.7 + 1.5,0);

			\node at (0.7 + 1.8, 0) {,};
			\node at (3.1, 0.5) {\Large $J_1=$};

			\draw (3.7 + 0.1,0) -- (3.7 + 0,0) -- (3.7 + 0,1) -- (3.7 + 0.1,1);
			\node at (3.7 + 0.5,0.75) {\Large $J_{B_1}$};
			\draw[dashed] (3.7 + 1,0) -- (3.7 + 1,1);
			\node at (3.7 + 0.5, 0.2) {0};
			\draw[dashed] (3.7 + 0.1,0.4) -- (3.7 + 1.,0.4);
			\node at (3.7 + 1.3, 0.5) {$ U_1 $};
			\draw (3.7 + 1.5,1) -- (3.7 + 1.6,1) -- (3.7 + 1.6,0) -- (3.7 + 1.5,0);
			\end{tikzpicture}
		\end{center}
		$\beta_1=\overrightarrow{g}_1,\dots,\overrightarrow{g}_r$ where $\overrightarrow{g}_i$ are all the basis vectors found so far for all eigenvalues in steps A.1 through A.5 which are arranged in order to yield a Jordan block matrix $J_{B_1}$.  $U_1$ block is a list of column vectors $\overrightarrow{u}_1, \dots ,\overrightarrow{u}_{n-r}$ which we don't know and must solve for.  The equation we have is $A P_1 = P_1 J_1$.  By the structure of how we have constructed $P_1$ and $J_1$ it is straight forward to see that each $\overrightarrow{u}_k$ has an equation $\overrightarrow{a}_k = P_1 \overrightarrow{u}_k$ where $\overrightarrow{a}_k$ denotes the $k^{th}$ column of $A$ and $k$ ranges from $r+1$ to $n$.  We will use $LU-Decomposition$ of $P_1$ to solve for $u_k$.
		\item Choose a $\lambda_i$ which has the lowest exponent of those remaining unsolved Jordan Blocks.  We will be finding the next Jordan Chain by the traditional means of finding $ker(J_1 - \lambda_i)^s$ for the appropriate power $s$.  Note how we are using $J_1$ to find the kernel rather than $A$.  Then the generalized eigenvectors of $J_1$ will then be translated back to the corresponding generalized eigenvectors of A by matrix similarity transformation.  I.e. if $\overrightarrow{y}$ is a generalized eigenvector of $J_1$ then $P_1 \overrightarrow{y}$ is a generalized eigenvector of $A$.  And the reason we are using $J_1$ rather than $A$ at this stage is because $J_1$ is much more sparse than $A$ and thus we can raise it to a power much quicker than $A$. Furthermore that is the reason we choose a lower exponent mentioned above so that we can raise $(J_1 - \lambda_i)^s$ to as low a power as possible.  That way when we do finally get to the remaining unsolved Jordan Blocks with higher powers our intermediate $J_i$ matrix will be as sparse as possible.

		\item \textbf{Iteration step}. The above two steps yielded one additional Jordan Block.  The remaining steps are to repeat the above two steps finding one Jordan Block at a time until we are done.  And we want to take advantage of any work done up to this point to avoid re-work as much as possible.  One way to achieve that that we haven't yet mentioned is to implement LU Decomposition directly step by step rather than use a canned pre-built LU routine.  And we will implement LU Decomposition in such a way that each subsequent $i+1$ iteration uses the LU Decomposition of iteration $i$ as a starting point.  The nature of LU Decomposition easily lends itself to such custom implementations.  In this case we are LU-factoring $P_i$ at each iteration.  The difference between $P_{i+1}$ and $P_i$ is that a few generalized eigenvectors replace the corresponding columns containing standard basis vectors $\overrightarrow{e}_i$.  So the LU decomposition at each iteration is exactly the same as the previous iteration up to the column in $P_i$ where the generalized eigenvectors stop and the standard basis vectors begin.  So at each iteration we save the LU decomposition steps up to that point.  We will explain the details of this process below.  The following gives a pictorial overview of how the Step B. iteration works.

		\begin{tikzpicture}[scale=0.85]
		\node at (0, 0.5) {\Large $P_1=$};
		\draw (0.7 + 0.1,0) -- (0.7 + 0,0) -- (0.7 + 0.0,1) -- (0.7 + 0.1,1);
		\node at (0.7 + 0.5,0.5) {\LARGE $\beta_1$};
		\draw[dashed] (0.7 + 1,0) -- (0.7 + 1,1);
		\node at (0.7 + 1.3, 0.75) {\Large 0};
		\draw[dashed] (0.7 + 1,0.4) -- (0.7 + 1.6,0.4);
		\node at (0.7 + 1.3, 0.16) {I};
		\draw (0.7 + 1.5,1) -- (0.7 + 1.6,1) -- (0.7 + 1.6,0) -- (0.7 + 1.5,0);

		\node at (0.7 + 1.8, 0) {,};
		\node at (3.1, 0.5) {\Large $J_1=$};

		\draw (3.7 + 0.1,0) -- (3.7 + 0,0) -- (3.7 + 0,1) -- (3.7 + 0.1,1);
		\node at (3.7 + 0.5,0.75) {\Large $J_{B_1}$};
		\draw[dashed] (3.7 + 1,0) -- (3.7 + 1,1);
		\node at (3.7 + 0.5, 0.2) {0};
		\draw[dashed] (3.7 + 0.1,0.4) -- (3.7 + 1.,0.4);
		\node at (3.7 + 1.3, 0.5) {$ U_1 $};
		\draw (3.7 + 1.5,1) -- (3.7 + 1.6,1) -- (3.7 + 1.6,0) -- (3.7 + 1.5,0);

		\node at (5.9, .5) {\Large{$\dots$}};
		\node at (6.8, 0.5) {\Large $P_i=$};

		\draw (4.4 + 3.1,0) -- (4.4 + 3,0) -- (4.4 + 3,1) -- (4.4 + 3.1,1);
		\node at (4.4 + 4.3,0.5) {\LARGE $\beta_1 \dots \beta_i$};
		\draw[dashed] (4.4 + 5.6,0) -- (4.4 + 5.6,1);
		\node at (4.4 + 5.9, 0.75) {\Large 0};
		\draw[dashed] (4.4 + 5.6,0.4) -- (4.4 + 6.2,0.4);
		\node at (4.4 + 5.9, 0.16) {I};
		\draw (4.4 + 6.1,1) -- (4.4 + 6.2,1) -- (4.4 + 6.2,0) -- (4.4 + 6.1,0);
		\node at (10.8, 0) {,};

		\node at (11.3, 0.5) {\Large $J_i=$};
		\draw (12.0 + 0.1,0) -- (12.0 + 0,0) -- (12.0 + 0,1) -- (12.0 + 0.1,1);
		\node at (12.0 + 0.5,0.75) {\Large $J_{B_i}$};
		\draw[dashed] (12.0 + 1,0) -- (12.0 + 1,1);
		\node at (12.0 + 0.5, 0.2) {0};
		\draw[dashed] (12.0 + 0.1,0.4) -- (12.0 + 1.,0.4);
		\node at (12.0 + 1.3, 0.5) {$ U_i $};
		\draw (12.0 + 1.5,1) -- (12.0 + 1.6,1) -- (12.0 + 1.6,0) -- (12.0 + 1.5,0);
		\node at (14.6, 0.5) {\Large $\dots P,J$};
		\end{tikzpicture}
		\newline
		Where each $\beta_i$ represents one or more column vectors.  At each iteration the size of the Jordan Block Matrix $J_{B_i}$ grows by one Jordan block and the number of column vectors in $U_i$ decreases by an equal number of vectors that are in the Jordan block.
		\item At the end of above iteration we have our final complete transition matrix P and our final Jordan form matrix J.  Now complete the LU decomposition of P the same way that it's done at the beginning of each iteration.  Then we can use that LU decomposition to quickly obtain $P^{-1}$.
	\end{enumerate}

	\begin{enumerate}[label=Step C.\arabic*:]
		\item This step will only be performed in the case where there might be multiple Maximal Length Jordan Chains.  I.e. when there are two or more Jordan Chains of the same length which is also the maximal length of all Jordan Chains for the given eigenvalue.  So this step starts off at the point where we have already found at least one maximal length Jordan Chain for $\lambda_i$ and we are now trying to find an additional maximal length Jordan Chain if it exists.  It is guaranteed that there will be at least one maximal length Jordan Chain for any given eigenvalue $\lambda_i$ but it is not guaranteed that there will be more than one.  I.e. this is a trial and error step.  Thus lets suppose that corresponding to $\lambda_i$, the linear factor in the characteristic polynomial is $(x-\lambda_i)^{j_i}$ and we have found one Maximal Length Jordan Chain of length $m+1$ where $m+1 \leq j_i/2$.  Suppose that the starting vector to find the 1st maximal length Jordan Chain was $\overrightarrow{w}=(1,1,\dots,1)$ and that produced generalized eigen basis vectors $\overrightarrow\beta_0, \dots, \overrightarrow\beta_m$.  Then choose another starting vector $\overrightarrow{y}=(1,-1,1,-1,\dots,(-1)^{n-1})$.  Then $\overrightarrow{y}$ is clearly linearly independent of $\overrightarrow{w}$ and it is very likely that it is also linearly independent of $\overrightarrow\beta_0, \dots, \overrightarrow\beta_m$ as a result of Theorem 6 below.  If it is not then just use some common sense to choose a different vector.  Now using this starting vector $\overrightarrow{y}$ proceed with Steps A.2 and A.3.  At the end of Step A.3 we must check if all the vectors in the Jordan Chain produced in Step A.3 $\overrightarrow{g}_0, \overrightarrow{g}_1, \dots , \overrightarrow{g}_h$ are linearly independent from $\overrightarrow\beta_0, \dots, \overrightarrow\beta_m$.  Note also that we should get $h = m$.  If they are linearly independent then that implies that there is an additional maximal length Jordan Chain and we have successfully found it.  However if they are all linearly dependent to $\overrightarrow\beta_0, \dots, \overrightarrow\beta_m$, then it is very unlikely that there are any more maximal length Jordan Chains for $\lambda_i$, in which case we go to the "B-Steps" because the "B-Steps" are guaranteed to find all remaining Jordan Blocks for $\lambda_i$ whether they are maximal length or not.  Optionally, depending on the environment that we are performing this algorithm, an additional different starting vector $\overrightarrow{y}$ can be tried just to be more certain that we didn't make an unlucky guess on the 1st try.  Note that this Step C. will be repeated as long as $j_i - b_i \geq Jc_i$ where $Jc_i$ denotes the length of the Maximal Jordan Chain of $\lambda_i$ and $b_i$ denotes the total number of generalized eigen basis vectors found so far corresponding to $\lambda_i$, keeping in mind that we stop repeating this step as soon as we know there are no more Maximal Length Jordan Chains to be found.
	\end{enumerate}

	\section{Implementation Notes}
	In order to optimally calculate $(A-\lambda_k I)^{j_k} \overrightarrow{w}$, one should avoid raising the matrix to a power $(A-\lambda_k I)^{j_k}$ which is quite an expensive calculation (i.e. we would be performing an $(n \times n) \times (n \times n)$ matrix multiplication $j_k$ times).  Instead perform $\overrightarrow{w} = (A-\lambda_k I)\overrightarrow{w}$ in a loop from $1 \dots j_k$.

	\textbf{LU Decomposition of $P_i$}.  At iteration step i+1, the change from $P_i$ to $P_{i+1}$ looks like this: \textit{by way of example} suppose iteration i+1 yields an additional Jordan block of size 2 with basis vectors $\overrightarrow{h}_1$ and $\overrightarrow{h}_2$ and $P_i=[\overrightarrow{g}_1 \dots \overrightarrow{g}_{r_i} \overrightarrow{e}_{r_i+1} \dots \overrightarrow{e}_n]$.  Then $P_{i+1}=[\overrightarrow{g}_1 \dots \overrightarrow{g}_{r_i} \overrightarrow{h}_1 \overrightarrow{h}_2 \overrightarrow{e}_{r_i+3} \dots \overrightarrow{e}_n]$.  Let $L_i U_i=P_i$ be the LU Decomposition at step i which is saved in memory.  The upper matrix can be thought of as $E P_i$ where $E$ is a product of elementary row operations $E=E_1 \dots E_i$ \cite{neringevard}.  To obtain $U_{i+1}$ initialize a matrix $Q=U_i$, next let $y_1=E\overrightarrow{h}_1$, $y_2=E\overrightarrow{h}_2$.  Then replace columns $r_i+1$ and $r_i+2$ in $Q$ with $y_1$ and $y_2$.  Now do LU Decomposition of Q to get $Q_L Q_U=Q$.  Then set $U_{i+1}=Q_U$.  Set $L_{i+1}=L_i$ and then modify the columns $r_i+1$ and $r_i+2$ of $L_{i+1}$ setting them to those same two columns of $Q_L$.  And that is all that has to be done to obtain an incremental LU Decomposition at step i+1.  No further decomposition steps have to be performed on the right hand side of $P_{i+1}$ because the columns are the tail end of standard basis vectors $\overrightarrow{e}_k$ which are already in the form required by LU decomposition.  In otherwords think of this as doing LU decomposition as one would normally do, only we are starting in the "middle" of a decomposition that has already been done up to column $r_i$ and then to finish the LU factorization, we only have to do the decomp steps on the two new columns changed in $P_{i+1}$.

	Special case when a standard basis vector $\overrightarrow{e_i}$ is an eigenvector of $A$:  the above algorithm precludes this in the assumptions about the matrix A.  However if there does happen to be one or more standard basis vectors that are eigenvectors, then we can still do the algorithm, and the only change that is needed is in steps B.1 and B.3 above, when forming the right-hand columns of $P_i$, just use a different standard basis vector in place of the eigenvector.  Then $P_i$ will no longer have an identity matrix in the lower right-hand block, but the steps of the algorithm will still work the same way; in that case the matrix equation to solve for the $U_i$ column vectors will be matched with whatever column vector $a_i$ of $A$ matches the standard basis vector $e_i$ of $P_i$ that is used.

	\section{Theorems}
	\begin{lemma}
	If V is a vector space over $\mathbb{C}$ and A is an $n \times n$ matrix with distinct eigenvalues $\{\lambda_1,\cdots,\lambda_m\}$ Then\newline
	(a) V = $G_A(\lambda_1)\oplus\cdots\oplus G_A(\lambda_m)$
	\newline
	(b) each $G_A(\lambda_i)$ is invariant under A.
	\end{lemma}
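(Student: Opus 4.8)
The plan is to prove this as the primary (generalized eigenspace) decomposition, leaning on the Cayley--Hamilton theorem in keeping with the paper's theme. Let $p(x) = \prod_{k=1}^{m}(x-\lambda_k)^{j_k}$ be the characteristic polynomial, so that $p(A)=0$ by Cayley--Hamilton, and for each $k$ set $q_k(x) = p(x)/(x-\lambda_k)^{j_k} = \prod_{\ell\ne k}(x-\lambda_\ell)^{j_\ell}$. Since the $\lambda_k$ are distinct, the polynomials $q_1,\dots,q_m$ share no common complex root, hence $\gcd(q_1,\dots,q_m)=1$, and by a B\'ezout identity there are polynomials $r_1,\dots,r_m$ with $\sum_k r_k(x)q_k(x)=1$. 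First I would introduce the operators $E_k := r_k(A)q_k(A)$ and record the three facts that carry the whole argument: $\sum_k E_k = I$; $E_kE_\ell = 0$ for $k\ne\ell$ (because $q_kq_\ell$ is divisible by $p$, an elementary exponent check, so $q_k(A)q_\ell(A)=0$); and therefore $E_k^2 = E_k\bigl(\sum_\ell E_\ell\bigr) = E_k$, so each $E_k$ is a projection and $V = \bigoplus_k \operatorname{im}E_k$.

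The second step is to identify $\operatorname{im}E_k$ with $G_A(\lambda_k)$. The inclusion $\operatorname{im}E_k \subseteq G_A(\lambda_k)$ is immediate, since $(A-\lambda_k I)^{j_k}E_k = r_k(A)\,p(A) = 0$. For the reverse inclusion, given $v\in G_A(\lambda_k)$ I would write $v = \sum_\ell E_\ell v$ with $E_\ell v \in \operatorname{im}E_\ell \subseteq G_A(\lambda_\ell)$, so that $v - E_k v = \sum_{\ell\ne k}E_\ell v$ lies in $G_A(\lambda_k)\cap\sum_{\ell\ne k}G_A(\lambda_\ell)$; the claim then reduces to showing this intersection is $\{0\}$. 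That I would prove by choosing an exponent $N$ with $(A-\lambda_k I)^N v = 0$, taking $f(x) = \prod_{\ell\ne k}(x-\lambda_\ell)^{N}$ (which annihilates any element of $\sum_{\ell\ne k}G_A(\lambda_\ell)$), and using that $(x-\lambda_k)^N$ and $f(x)$ are coprime together with a B\'ezout identity $a(x)(x-\lambda_k)^N + b(x)f(x) = 1$ to conclude that any such common vector equals $a(A)(A-\lambda_k I)^N(\cdot) + b(A)f(A)(\cdot) = 0$. Hence $v = E_k v \in \operatorname{im}E_k$, and combined with $V = \bigoplus_k\operatorname{im}E_k$ this yields part (a). As a sanity check I would also note each summand is nonzero, since $\lambda_k$ is a genuine eigenvalue and $\ker(A-\lambda_k I)\subseteq G_A(\lambda_k)$.

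Part (b) comes almost for free: each $E_k$ is a polynomial in $A$ and so commutes with $A$, whence $A(\operatorname{im}E_k)\subseteq\operatorname{im}E_k$; equivalently, if $(A-\lambda_k I)^N v = 0$ then $(A-\lambda_k I)^N(Av) = A(A-\lambda_k I)^N v = 0$, so $Av\in G_A(\lambda_k)$.

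The main obstacle — the only place requiring genuine care rather than bookkeeping — is the reverse inclusion $G_A(\lambda_k)\subseteq\operatorname{im}E_k$, i.e. the independence statement $G_A(\lambda_k)\cap\sum_{\ell\ne k}G_A(\lambda_\ell) = \{0\}$ and the handling of the nilpotency exponent (whether one argues with a uniform $N$ or pins it down to $j_k$ via a stabilization argument on the chain $G_A(\lambda_k,1)\subseteq G_A(\lambda_k,2)\subseteq\cdots$). Everything else — Cayley--Hamilton for $p(A)=0$, the divisibility $p\mid q_kq_\ell$, and the two B\'ezout identities — is routine.
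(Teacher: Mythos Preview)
Your argument is correct: this is the standard primary decomposition proof via Cayley--Hamilton and B\'ezout, and all the steps (divisibility $p\mid q_kq_\ell$, the projection identities, the coprimality argument for the trivial intersection) go through as you describe.

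It is, however, a genuinely different route from the paper's. The paper does not supply its own proof of this lemma at all; it simply cites Axler's \emph{Linear Algebra Done Right}, 3rd ed., Theorem~8.21. Axler's argument there is characteristically determinant-free: it proceeds by induction on $\dim V$, using the splitting $V=\ker(A-\lambda I)^{\dim V}\oplus\operatorname{range}(A-\lambda I)^{\dim V}$ for a chosen eigenvalue and then restricting to the range. Your approach has the advantage of being self-contained and of actually invoking Cayley--Hamilton, which is thematically apt given the paper's title and the way Theorem~5 is later proved; it also delivers the projection operators $E_k$ explicitly as polynomials in $A$, which is exactly the mechanism underlying Step~A.2 of the algorithm. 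Axler's route buys independence from the characteristic polynomial and determinants, but that is not a concern here since the paper assumes the factored characteristic polynomial as input from the outset.
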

	\begin{proof}
		See the proof in Sheldon Axler's book "Linear Algebra Done Right" 3rd ed. section 8.B. theorem 8.21.
		\cite{axler2015linear}
	\end{proof}
	\begin{lemma}
		If a subspace W of V is invariant under a linear operator A then W is invrariant under $(A-\lambda I)$ and W is invariant under $A^j$ for any positive integer j.
	\end{lemma}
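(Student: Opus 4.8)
The plan is to argue directly from the definition of an invariant subspace, namely that $W$ is invariant under $A$ precisely when $Aw \in W$ for every $w \in W$, together with the two closure properties that make $W$ a subspace: closure under vector addition and closure under scalar multiplication. No appeal to bases, dimension, or the structure of $\mathbb{C}$ is needed, so the argument goes through verbatim over any field; I would simply phrase it elementwise.

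First I would dispose of $(A - \lambda I)$. Fix an arbitrary $w \in W$. Then $(A - \lambda I)w = Aw - \lambda w$. By the hypothesis of invariance under $A$ we have $Aw \in W$, and since $W$ is a subspace it contains the scalar multiple $\lambda w$ and hence contains the difference $Aw - \lambda w$. As $w$ was arbitrary, $(A - \lambda I)(W) \subseteq W$, which is exactly the statement that $W$ is invariant under $A - \lambda I$.

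Next I would treat $A^j$ by induction on $j$. The base case $j = 1$ is the hypothesis. For the inductive step, assume $A^j(W) \subseteq W$; given $w \in W$, write $A^{j+1}w = A\!\left(A^j w\right)$. The inner vector $A^j w$ lies in $W$ by the inductive hypothesis, and applying $A$ to a vector of $W$ again lands in $W$ by the base hypothesis, so $A^{j+1}w \in W$. This closes the induction and establishes $A^j(W) \subseteq W$ for every positive integer $j$.

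There is essentially no genuine obstacle here; the only points requiring care are to invoke the subspace axioms (and nothing deeper) when forming $Aw - \lambda w$, and to keep the universal quantifier ``for all $w \in W$'' explicit at each stage so that the set inclusions are clean. One could alternatively give a one-line proof by observing that the operators leaving $W$ invariant form a subalgebra of $\mathrm{End}(V)$ containing both $A$ and the identity, hence containing every polynomial in $A$, in particular $A - \lambda I$ and $A^j$; but the elementwise version above is the most transparent for this paper and sets up the later use of this lemma on the factors of the characteristic polynomial.
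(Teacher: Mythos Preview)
Your proof is correct and follows essentially the same approach as the paper: an elementwise argument using the subspace closure properties for $(A-\lambda I)$, and the decomposition $A^{j}w = A\bigl(A^{j-1}w\bigr)$ (which the paper leaves as an implicit induction) for the powers. Your write-up is simply more explicit about the induction and the quantifiers, and the subalgebra remark is a nice optional bonus.
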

	\begin{proof}
		suppose $\overrightarrow{w}\in W$ then $(A-\lambda I)\overrightarrow{w}$ = $A\overrightarrow{w} - \lambda \overrightarrow{w}$ = $\overrightarrow{w}_1 - \lambda \overrightarrow{w}\in W$ since $\overrightarrow{w}_1=A\overrightarrow{w}\in W$ and $\lambda \overrightarrow{w}\in W$.  To prove invariance under $A^j$ is quite straight forward: just look at $A^j \overrightarrow{w} = A^{j-1}A\overrightarrow{w}$.
	\end{proof} 
	\begin{lemma}
		$(A-\lambda I)$ commutes with $(A-\gamma I)$
	\end{lemma}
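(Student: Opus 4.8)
The plan is to verify the identity by direct expansion of both products and observing that they yield the same expression in $A$. First I would write out $(A-\lambda I)(A-\gamma I)$ and distribute, using that matrix multiplication is distributive over addition: this gives $A^2 - \gamma AI - \lambda IA + \lambda\gamma I^2$. Since $I$ is the identity we have $AI = IA = A$ and $I^2 = I$, and since $\lambda,\gamma \in \mathbb{C}$ are scalars, $\lambda(IA) = \lambda A$ and $\gamma(AI) = \gamma A$; collecting terms yields $A^2 - (\lambda+\gamma)A + \lambda\gamma I$.

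Next I would perform the same expansion for $(A-\gamma I)(A-\lambda I)$, obtaining $A^2 - \lambda AI - \gamma IA + \gamma\lambda I^2 = A^2 - (\lambda+\gamma)A + \lambda\gamma I$. The two computations produce literally the same expression, so $(A-\lambda I)(A-\gamma I) = (A-\gamma I)(A-\lambda I)$, which is the assertion of the lemma. (Equivalently, one can observe that both $A$ and $I$ commute with $A$ and with $I$, hence any two polynomials in $A$ commute, and $A-\lambda I$, $A-\gamma I$ are such polynomials.)

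The only facts used are the distributive law for matrix multiplication, the fact that a scalar multiple of the identity commutes with every matrix, and the trivial observation that $A$ commutes with itself, so there is no genuine obstacle here. The lemma is recorded merely because it is invoked repeatedly in what follows — in particular to justify that in Step A.2 the factors $(A-\lambda_k I)^{j_k}$ for distinct $\lambda_k$ may be applied in any order, and more generally that the projections onto the generalized eigenspaces $G_A(\lambda_i)$ built from these commuting factors are well defined independently of the order of composition.
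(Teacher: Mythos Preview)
Your proof is correct and follows the same approach as the paper: a direct expansion of both products to the common expression $A^2 - (\lambda+\gamma)A + \lambda\gamma I$. The paper's version is just the one-line computation without the explanatory remarks about $AI=IA=A$ and without the parenthetical observation about polynomials in $A$ commuting.
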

	\begin{proof}
		$(A-\lambda I) (A-\gamma I)$ = $A^2-\lambda A-\gamma A + \lambda \gamma I$ = $A^2-\gamma A-\lambda A + \gamma \lambda I$ = $(A-\gamma I) (A-\lambda I)$
	\end{proof}
	\begin{lemma}
		Suppose $\overrightarrow{u}\in G_A(\lambda_i, t)$ and $\overrightarrow{u}\notin G_A(\lambda_i, t-1)$ THEN $\forall$ $\lambda_h\neq \lambda_i, r\in \mathbb{Z}_{>0}$, $(A-\lambda_h I)^r \overrightarrow{u} \in G_A(\lambda_i,t)$ and $(A-\lambda_h I)^r \overrightarrow{u} \notin G_A(\lambda_i,t-1)$
	\end{lemma}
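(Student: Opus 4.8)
The plan is to handle the two assertions separately, using Lemma 3 (commutativity of $(A-\lambda I)$ and $(A-\gamma I)$) as the main workhorse, together with the elementary observation that a nonzero vector annihilated by $(A-\lambda_i I)$ is a genuine eigenvector of $A$ for $\lambda_i$.

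First I would prove the containment $(A-\lambda_h I)^r \overrightarrow{u} \in G_A(\lambda_i, t)$. Since $G_A(\lambda_i,t) = \ker(A-\lambda_i I)^t$, it suffices to compute $(A-\lambda_i I)^t (A-\lambda_h I)^r \overrightarrow{u}$. By Lemma 3 the factors $(A-\lambda_i I)$ and $(A-\lambda_h I)$ commute, so this equals $(A-\lambda_h I)^r (A-\lambda_i I)^t \overrightarrow{u} = (A-\lambda_h I)^r \overrightarrow{0} = \overrightarrow{0}$, which is precisely the claim.

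For the non-containment I would argue by contradiction. Assume $(A-\lambda_h I)^r \overrightarrow{u} \in G_A(\lambda_i, t-1)$, i.e. $(A-\lambda_i I)^{t-1}(A-\lambda_h I)^r \overrightarrow{u} = \overrightarrow{0}$, and set $\overrightarrow{v} := (A-\lambda_i I)^{t-1}\overrightarrow{u}$. The hypothesis $\overrightarrow{u}\notin G_A(\lambda_i,t-1)$ gives $\overrightarrow{v}\neq\overrightarrow{0}$, while $\overrightarrow{u}\in G_A(\lambda_i,t)$ gives $(A-\lambda_i I)\overrightarrow{v} = (A-\lambda_i I)^t\overrightarrow{u} = \overrightarrow{0}$, so $\overrightarrow{v}$ is an eigenvector of $A$ with eigenvalue $\lambda_i$. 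Commuting factors via Lemma 3 in the contradiction hypothesis yields $(A-\lambda_h I)^r \overrightarrow{v} = \overrightarrow{0}$. On the other hand, $A\overrightarrow{v} = \lambda_i \overrightarrow{v}$ forces $(A-\lambda_h I)\overrightarrow{v} = (\lambda_i-\lambda_h)\overrightarrow{v}$ and hence $(A-\lambda_h I)^r \overrightarrow{v} = (\lambda_i-\lambda_h)^r \overrightarrow{v}$. Since $\lambda_h\neq\lambda_i$, the scalar $(\lambda_i-\lambda_h)^r$ is nonzero, so $\overrightarrow{v}=\overrightarrow{0}$, a contradiction. This finishes the proof.

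I do not expect a real obstacle here; the only care points are (a) invoking the commutation identities cleanly rather than hand-waving the interchange of $(A-\lambda_i I)^{t-1}$ with $(A-\lambda_h I)^r$, and (b) noting that the edge case $t=1$ (where $\overrightarrow{v}=\overrightarrow{u}$ is itself the eigenvector and $G_A(\lambda_i,0)=\{\overrightarrow{0}\}$) is handled by the same argument. A slightly more structural alternative would be to restrict $A$ to the invariant subspace $G_A(\lambda_i)$ (Lemma 1(b)), observe that $(A-\lambda_h I)$ acting on $G_A(\lambda_i)$ has no eigenvalue other than $\lambda_i-\lambda_h\neq 0$ and is therefore invertible, so $(A-\lambda_h I)^r$ is a bijection of $G_A(\lambda_i)$ that necessarily maps each nested kernel $G_A(\lambda_i,s)$ onto itself; this delivers both assertions simultaneously. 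I would most likely present the direct eigenvector computation as the primary proof since it is self-contained and short.
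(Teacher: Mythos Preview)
Your proof is correct. The containment $(A-\lambda_h I)^r\overrightarrow{u}\in G_A(\lambda_i,t)$ is handled exactly as the paper does, by commuting the factors via Lemma~3 and using $(A-\lambda_i I)^t\overrightarrow{u}=\overrightarrow{0}$. For the non-containment, however, you take a genuinely different route from the paper. The paper argues that if $(A-\lambda_h I)^r\overrightarrow{u}$ fell into $G_A(\lambda_i,s)$ for some $s<t$, then the nonzero vector $\overrightarrow{z}=(A-\lambda_i I)^s\overrightarrow{u}$ would lie simultaneously in $G_A(\lambda_i)$ and $G_A(\lambda_h)$, contradicting the direct-sum decomposition of Lemma~1. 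You instead specialize to $s=t-1$, observe that $\overrightarrow{v}=(A-\lambda_i I)^{t-1}\overrightarrow{u}$ is an actual eigenvector for $\lambda_i$, and compute $(A-\lambda_h I)^r\overrightarrow{v}=(\lambda_i-\lambda_h)^r\overrightarrow{v}\neq\overrightarrow{0}$ directly. Your argument is more elementary in that it never invokes Lemma~1 or the generalized-eigenspace decomposition; the paper's argument is slightly more structural and in fact proves the marginally stronger statement that $(A-\lambda_h I)^r\overrightarrow{u}$ lands in no $G_A(\lambda_i,s)$ with $s<t$. Your alternative sketch (restricting to $G_A(\lambda_i)$ and noting that $(A-\lambda_h I)$ is invertible there) is also valid and is essentially a repackaging of the same scalar computation.
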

	\begin{proof}
		Since $(A-\lambda_h I)^r$ is invariant on each $G_A(\lambda)$ by Lemma 2 then it must map $\overrightarrow{u}$ into $G_A(\lambda_i,s)$ for some $s$.  We need to prove two things: first that $(A-\lambda_h I)$ cannot map $\overrightarrow{u}$ into a smaller subspace $G_A(\lambda_i,s)$ where $1 \le s<t$; and second it does map $\overrightarrow{u}$ into $G_A(\lambda_i,t)$. For the sake of contradiction lets make Assumption1 that $\exists \overrightarrow{u}\in G_A(\lambda_i, t)$ and $\overrightarrow{u}\notin G_A(\lambda_i, t-1)$ AND $(A-\lambda_h I)^r \overrightarrow{u} \in G_A(\lambda_i,s)$ for some $s < t$ and some $h\neq i$.  Then $(A-\lambda_h I)^r \overrightarrow{u} \in G_A(\lambda_i,s)$ $\Rightarrow$ $(A-\lambda_i I)^s (A-\lambda_h I)^r \overrightarrow{u} = \overrightarrow{0}$.  By Lemma 3 above the matrices commute and thus $(A-\lambda_i I)^s (A-\lambda_h I)^r \overrightarrow{u}$ = $(A-\lambda_h I)^r (A-\lambda_i I)^s \overrightarrow{u}$.  And since $s < t$ then $(A-\lambda_i I)^s \overrightarrow{u} \neq \overrightarrow{0}$ (this is by our assumption).  But that would mean that $\overrightarrow{z}=(A-\lambda_i I)^s \overrightarrow{u}$ is a non-zero vector that is contained in two distinct subspaces $G_A(\lambda_h)$ and $G_A(\lambda_i)$.  And that is impossible since V is a direct sum of the $G_A(\lambda_j)$ subspaces.  I.e. $G_A(\lambda_h)\cap G_A(\lambda_i) = \{\overrightarrow{0}\}$ for any $\lambda_h \neq\lambda_i$.  Thus Assumption1 leads to a contradiction.  That proves our first item.  The second item follows easily since the matrices $(A-\lambda_i I)^t$ and $(A-\lambda_h I)^r$ commute and thus $(A-\lambda_i I)^t (A-\lambda_h I)^r \overrightarrow{u}$ = $(A-\lambda_h I)^r (A-\lambda_i I)^t \overrightarrow{u}=(A-\lambda_h I)^r \overrightarrow{0}=\overrightarrow{0}$.
	\end{proof}
	\begin{theorem}[Main Theorem]
	Let $V=G_A(\lambda_1)\oplus G_A(\lambda_2)\oplus\cdots\oplus G_A(\lambda_m)$ be the direct sum of the generalized eigen spaces with respect to a linear transformation represented by an $n \times n$ matrix A.
	\newline
	Let $\overrightarrow{w}=\overrightarrow{w}_1 + \overrightarrow{w}_2 + \cdots + \overrightarrow{w}_m$ where
	\newline
	\newline
	$\overrightarrow{w}_1=a_{1,1} \overrightarrow{g}_{1,1} + \cdots + a_{1,j_1} \overrightarrow{g}_{1,j_1} \in G_A(\lambda_1,j_1)=G_A(\lambda_1)$.
	\newline
	$\vdots$
	\newline
	$\overrightarrow{w}_m=a_{m,1} \overrightarrow{g}_{m,1} + \cdots + a_{m,j_m} \overrightarrow{g}_{m,j_m} \in G_A(\lambda_m,j_m)=G_A(\lambda_m)$.
	\newline
	\newline
	where each $\{\overrightarrow{g}_{k1}, \overrightarrow{g}_{k2}, \cdots,\overrightarrow{g}_{kj_k} \}$ is a basis of the subspace $G_A(\lambda_k)$ which has dimension $j_k$.
	\newline
	And let $B=(A-\lambda_1 I)^{j_1} (A-\lambda_2)^{j_2}\cdots (A-\lambda_{k-1} I)^{j_{k-1}} (A-\lambda_{k+1} I)^{j_{k+1}} \cdots (A-\lambda_m I)^{j_m}$.
	\newline
	Then
	$B\overrightarrow{w} \in G_A(\lambda_k, j_k)$.  And if $t$ is the minimal positive integer such that $(A-\lambda_k)^t\overrightarrow{w}_k=0$ then $t$ is also the minimal integer such that $(A-\lambda_k)^t B\overrightarrow{w}=0$.  I.e. the matrix $B$ annihilates all components outside of the subspace $G_A(\lambda_k)$ and is invariant on the subspace $G_A(\lambda_k)$ and furthermore it does not change the minimum exponent $t$ for which $(A - \lambda_k)^t$ annihilates either of $\overrightarrow{w_k}$ or $B\overrightarrow{w}$.
	\end{theorem}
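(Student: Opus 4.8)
The plan is to reduce the whole statement to the four lemmas already in hand: Lemmas 1 and 2 give invariance of each $G_A(\lambda_h)$ under every factor appearing in $B$, Lemma 3 lets me reorder those factors freely, and Lemma 4, applied once per factor of $B$, is what will keep track of the minimal annihilating exponent. So I would split the argument into two halves — first the ``$B$ kills everything outside $G_A(\lambda_k)$ and stays inside $G_A(\lambda_k)$'' part, then the ``$B$ does not change the minimal exponent $t$'' part.

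For the first half, note that $B=\prod_{i\neq k}(A-\lambda_i I)^{j_i}$. By Lemma 1(b) each $G_A(\lambda_h)$ is invariant under $A$, hence under each $A-\lambda_i I$ and each power $(A-\lambda_i I)^{j_i}$ by Lemma 2, hence under $B$. Expanding $\overrightarrow{w}=\overrightarrow{w}_1+\cdots+\overrightarrow{w}_m$ and using linearity, $B\overrightarrow{w}=\sum_h B\overrightarrow{w}_h$. For each $h\neq k$ the factor $(A-\lambda_h I)^{j_h}$ occurs in $B$; since $\overrightarrow{w}_h\in G_A(\lambda_h)=G_A(\lambda_h,j_h)$ we have $(A-\lambda_h I)^{j_h}\overrightarrow{w}_h=\overrightarrow{0}$, and by Lemma 3 I may commute that factor to the front, so $B\overrightarrow{w}_h=\overrightarrow{0}$. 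Hence $B\overrightarrow{w}=B\overrightarrow{w}_k$, and invariance of $G_A(\lambda_k)$ under $B$ gives $B\overrightarrow{w}=B\overrightarrow{w}_k\in G_A(\lambda_k)=G_A(\lambda_k,j_k)$, which is the first assertion.

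For the second half, let $t$ be minimal with $(A-\lambda_k I)^t\overrightarrow{w}_k=\overrightarrow{0}$ (the case $\overrightarrow{w}_k=\overrightarrow{0}$ is trivial, so assume $\overrightarrow{w}_k\neq\overrightarrow{0}$, so $t\ge 1$ and, by minimality, $\overrightarrow{w}_k\in G_A(\lambda_k,t)\setminus G_A(\lambda_k,t-1)$). Enumerate the indices $\neq k$ as $h_1,\dots,h_{m-1}$, set $\overrightarrow{u}_0=\overrightarrow{w}_k$ and $\overrightarrow{u}_\ell=(A-\lambda_{h_\ell} I)^{j_{h_\ell}}\overrightarrow{u}_{\ell-1}$, so that $\overrightarrow{u}_{m-1}=B\overrightarrow{w}_k=B\overrightarrow{w}$. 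Applying Lemma 4 with $\lambda_i=\lambda_k$ at each step, an induction on $\ell$ shows $\overrightarrow{u}_\ell\in G_A(\lambda_k,t)\setminus G_A(\lambda_k,t-1)$ for every $\ell$; in particular $B\overrightarrow{w}\in G_A(\lambda_k,t)\setminus G_A(\lambda_k,t-1)$, which says precisely that $(A-\lambda_k I)^t B\overrightarrow{w}=\overrightarrow{0}$ while $(A-\lambda_k I)^{t-1}B\overrightarrow{w}\neq\overrightarrow{0}$, i.e.\ $t$ is also minimal for $B\overrightarrow{w}$.

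The step I expect to be the main obstacle — really the only nonroutine point — is setting up this induction cleanly: each invocation of Lemma 4 needs its hypothesis that the current vector lies in $G_A(\lambda_k,t)$ but \emph{not} in $G_A(\lambda_k,t-1)$, and this is exactly the conclusion produced by the previous invocation, so one must be careful that the ``not in the smaller subspace'' half is genuinely propagated at each stage, not just the ``in the larger subspace'' half. Once that is in place, the rest — commuting factors via Lemma 3, using $G_A(\lambda_h)=G_A(\lambda_h,j_h)$, and invoking the direct-sum decomposition of Lemma 1(a) — is bookkeeping.
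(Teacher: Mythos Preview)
Your proposal is correct and follows essentially the same route as the paper's proof: invariance of the $G_A(\lambda_h)$ under each factor (Lemmas 1 and 2) to kill all components with $h\neq k$ and land $B\overrightarrow{w}$ in $G_A(\lambda_k)$, followed by an appeal to Lemma 4 for the preservation of the minimal exponent $t$. Your version is in fact more carefully written than the paper's, since you make explicit the factor-by-factor induction needed to apply Lemma 4 (which is stated only for a single factor $(A-\lambda_h I)^r$) across the full product $B$, whereas the paper invokes Lemma 4 once and leaves this iteration implicit.
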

	\begin{proof}
		Let $\overrightarrow{w}$ be any arbitrary vector in V with a non-zero component in $G_A(\lambda_k)$.  Write $\overrightarrow{w}$ as a sum: $\overrightarrow{w}=\overrightarrow{w}_1 + \overrightarrow{w}_2 + \cdots + \overrightarrow{w}_m$ where each $\overrightarrow{w}_i \in G_A(\lambda_i)$. Now to compute $B \overrightarrow{w}$, observe that each factor $(A-\lambda_i I)^{j_i}$ will leave each subspace component invariant by Lemma 2.  And it will map the component with the same index to the 0-vector since the exponent equals the value of the exponent of the characteristic polynomial.  I.e. for $i\neq h$ and $\overrightarrow{w}_h\neq \overrightarrow{0}$, $(A-\lambda_i I)^{j_i} \overrightarrow{w}_h\in G_A(\lambda_h)$ and is non-zero. And $(A-\lambda_i I)^{j_i} \overrightarrow{w}_i=\overrightarrow{0}$.  And since B is the product of all the factors $(A-\lambda_i I)^{j_i}$ except for $i=k$ then $ B\overrightarrow{w}=\overrightarrow{z} \in G_A(\lambda_k)$.  I.e. it maps all components of $ \overrightarrow{w} $ to zero except for the component in the subspace $G_A(\lambda_k)$.  And by Lemma 4 the minimum exponent $t$ such that $\overrightarrow{w}_k \in G_A(\lambda_k, t)$ is equal to the minimum exponent $r$ such that $B\overrightarrow{w} \in G_A(\lambda_k, r)$.  This completes our proof of the main theorem.
	\end{proof}
	
	\begin{theorem}
		Let $\{\overrightarrow{v}_1, \overrightarrow{v}_2, \cdots, \overrightarrow{v}_n\}$ be any basis of V.  And let $\overrightarrow{w}$ = any non-zero vector in V chosen at random.  And let $\overrightarrow{w}=a_1 \overrightarrow{v}_1 + a_2 \overrightarrow{v}_2 + \cdots + a_n \overrightarrow{v}_n$ be the vector $\overrightarrow{w}$ expressed as a linear combination of the basis vectors.  Then the probability that $a_i = 0$ for any $i$ is zero.
	\end{theorem}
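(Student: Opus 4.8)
The plan is to recognize that this is fundamentally a measure-theoretic statement, so the first task is to pin down what ``chosen at random'' means. I would fix the hypothesis to be that $\overrightarrow{w}$ is sampled from some probability distribution on $V$ that is absolutely continuous with respect to Lebesgue measure --- for instance the uniform distribution on a ball, or a nondegenerate Gaussian --- after identifying $V$ with $\mathbb{C}^n \cong \mathbb{R}^{2n}$ via any fixed basis. The key preliminary observation is that this property does not depend on which basis is used for the identification, since a change of basis is a linear isomorphism and hence carries Lebesgue-null sets to Lebesgue-null sets (its real Jacobian is a nonzero constant).

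Next, for each fixed index $i$, I would describe the ``bad set'' $H_i = \{\,\overrightarrow{w} \in V : a_i = 0\,\}$, where $a_i$ is the $i^{th}$ coordinate of $\overrightarrow{w}$ in the basis $\{\overrightarrow{v}_1,\dots,\overrightarrow{v}_n\}$. This $H_i$ is precisely the kernel of the $i^{th}$ coordinate linear functional, hence a linear subspace of $V$ of complex dimension $n-1$, equivalently a real-linear subspace of $\mathbb{R}^{2n}$ of real dimension $2n-2$. I would then invoke the standard fact that any proper linear (or affine) subspace of $\mathbb{R}^{N}$ has $N$-dimensional Lebesgue measure zero: after an orthogonal change of coordinates the subspace lies inside a set of the form $\{x_N = 0\}$, which is Lebesgue-null by Fubini's theorem. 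Consequently each $H_i$ has measure zero, and by absolute continuity the sampled vector lands in $H_i$ with probability zero.

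Finally, the event ``$a_i = 0$ for some $i$'' is the union $\bigcup_{i=1}^{n} H_i$, a finite union of measure-zero sets, hence itself measure zero; by absolute continuity it has probability zero, which is exactly the assertion.

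I expect the only real obstacle to be the modeling step: the statement as literally written is not provable without some assumption on the sampling distribution, since a degenerate distribution concentrated on a coordinate hyperplane would violate it. So the genuine content of the ``proof'' is choosing the correct hypothesis --- absolute continuity with respect to Lebesgue measure, a condition satisfied by every reasonable notion of ``random'' on a finite-dimensional space --- after which the measure-theoretic steps are entirely routine. It is worth remarking in the write-up that the same argument shows, more generally, that the coordinates of a random vector avoid any fixed finite or countable collection of proper affine subspaces almost surely, which is what justifies the linear-independence choices invoked in Steps A.1 and C.1.
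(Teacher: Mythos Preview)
Your proposal is correct, and it is in fact more careful than the paper's own argument. Both proofs finish with the same union-bound step,
\[
\Pr\!\Bigl(\bigcup_{i=1}^{n}\{a_i=0\}\Bigr)\;\le\;\sum_{i=1}^{n}\Pr(a_i=0)=0,
\]
so the overall architecture is the same. The difference lies in how $\Pr(a_i=0)=0$ is justified. The paper simply asserts that sampling $\overrightarrow{w}$ is the same as choosing each coefficient $a_i$ independently with ``equal likelihood'' over $\mathbb{C}$, so that hitting the single point $0$ has probability zero. This is informal on two counts: there is no uniform distribution on $\mathbb{C}$, and the argument silently assumes the sampling is done coordinate-wise in the \emph{target} basis $\{\overrightarrow{v}_i\}$ rather than in whatever basis one actually samples in (typically the standard basis). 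Your route sidesteps both issues: by positing an arbitrary absolutely continuous distribution and observing that each $\{a_i=0\}$ is a proper linear subspace---hence Lebesgue-null, with null-ness preserved under the linear change of basis---you obtain a rigorous and strictly more general conclusion. Your closing remark, that the same reasoning handles any countable collection of proper affine subspaces, is also correct and is precisely what underpins the linear-independence choices in Steps~A.1 and~C.1 of the algorithm.
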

	\begin{proof}
		Choosing a random vector $ \overrightarrow{w} $ is the same as independently choosing each coefficient of the basis vectors one at a time and then adding up the components to obtain $\overrightarrow{w}$.  The probability of choosing $a_i=0$ is 0 because 0 is just one point of the entire complex plane and it is equally likely to choose any point in the complex plane.  And thus the probability of choosing a vector w such that at least one component has 0 coefficient is $prob(a_1=0 \vee a_2=0 \vee \cdots \vee a_n=0)\leq prob(a_1=0) + prob(a_2=0) + \cdots + prob(a_n=0)$ = $0 \times n = 0$.
	\end{proof}

	\section{Comments}
	If the algorithm is implemented in computer software then it should be done in the environment of arbitrary precision number fields that are implemented in symbolic algebra packages.  Solving Jordan Normal Form problems using fixed precision floating point numbers is generally avoided in the context of numerical analysis.  Eigenvalues are very sensitive to small perturbations caused by roundoff errors that completely alter the structure of the Jordan Normal Form of the matrix.  See reference "On the computation of Jordan Canonical Form" by Zhang and Zhang \cite{zhang2012computation}.

	In light of Theorem 6 above we can see that almost any choice of a starting vector $\overrightarrow{w}$ for the algorithm will do just fine.  Having said that, some common sense must be used in certain situations.  To illustrate with an extreme example: suppose the generalized eigen space basis was something like this: $ \overrightarrow{v}_1 $=(1,0,0,\dots,0), $ \overrightarrow{v}_2 $=(0,2,1,0,0,\dots,0), $ \overrightarrow{v}_3 $=(0,0,1,0,0,\dots,0), etc.  We can visually see that many of these basis vectors are in fact standard basis vectors such that for any given coordinate $i$ there is only one or two basis vectors that have a non-zero value in the $i^{th}$ component.  In this case we should not choose a vector $\overrightarrow{w}$ with any 0 coordinates.  A good rule of thumb is to choose w with non-zero components in every coordinate.  E.g. like $ \overrightarrow{w} $=(1,1,\dots,1).

	The reason we want $\overrightarrow{w}$ to have a non-zero coefficient of every generalized basis vector is because that is what yields the longest Jordan chain in the algorithm.  In otherwords for a given subspace $G_A(\lambda_i)$ the vector $\overrightarrow{w}$ needs to have a non-zero component of the subspace $G(\lambda_i,k)$ for the maximal integer $k$.  I.e. $G(\lambda_i,1) \subsetneq G(\lambda_i,2) \subsetneq \cdots \subsetneq G(\lambda_i,j_i)$ where $j_i$ is the maximal integer such that the subspace sequence is strictly increasing (i.e. $G(\lambda_i,k)=G(\lambda,j_i)$ $\forall k\geq j_i$).  And so we definitely want $\overrightarrow{w}$ to have a non-zero component in the subspace $G(\lambda_i,j_i)$ because otherwise the algorithm will not yields the longest Jordan Chain.
	
	Some of the ideas in this article came from the references as we will now describe.  The subspace spanned by a Jordan Chain is a cyclic subspace; this paper uses a similar technique to what we do in step A.3 to calculate a basis for a cyclic subspace: \cite{li1997determining}.  Attempts were made to derive the remaining smaller Jordan Chains (in the B-steps) by using starting vectors that are within the orthogonal complement of the subspace generated by basis vectors found so far in steps A.1 - A.5.  I.e. let $W$ be the subspace generated by all the basis vectors found in steps A.1 - A.5 and let $W^{\bot}$ be the orthogonal complement with respect to the standard inner product.  It was hoped that we would be able to calculate the remaining Jordan Chains directly as in step A.2 and A.3 by limiting the starting vector to $W^{\bot}$ and thus steering clear of $W$.  But every variation of this approach ended up yielding vectors in $W$ and not giving us any new basis vectors in $W^{\bot}$.  After many failed attempts it became apparent that there are subtleties involved with orthogonality in relation to linear operators \cite{BrianConrad}.  As Brian Conrad's web page points out, non-orthogonal linear operators do not respect inner products, and if you construct orthogonal vectors with an inner product, you cannot expect them to interact well with the linear operator, in relation to the orthogonal subspace.  Thus we abandoned that approach and explored a more direct block-matrix equation approach.  Ideas to progressively solve matrix equations via block-matrix fashion is here: \cite{bujosa1998jordan}. 
	\newline
	\section{Illustrative Example}
	Let
	\newline
	\newline
    $A=\begin{pmatrix}
     1679 & 5708 & -545 & 1814 & 948 & -1644 & 6250 & -672 & 5777 & -4718 \\
	-384 & -1320 & 99 & -450 & -217 & 376 & -1337 & 131 & -1274 & 1030 \\
	224 & 1301 & 692 & 1211 & -136 & -227 & -2023 & 550 & -1153 & 710 \\
	-152 & -530 & 16 & -211 & -103 & 145 & -451 & 28 & -501 & 355 \\
	-105 & -197 & 238 & 136 & -170 & 94 & -1170 & 206 & -971 & 635 \\
	-101 & -73 & 365 & 292 & -267 & 85 & -1652 & 304 & -1386 & 821 \\
	-55 & -20 & 245 & 225 & -127 & 49 & -1060 & 210 & -802 & 547 \\
	35 & 122 & -2 & 52 & 27 & -34 & 98 & -2 & 113 & -73 \\
	10 & -74 & -141 & -158 & 81 & -4 & 563 & -115 & 446 & -258 \\
	-5 & -76 & -75 & -100 & 36 & 8 & 273 & -60 & 205 & -113
	\end{pmatrix}
	$
	\newline
	\newline
	In order to to avoid visual clutter, we will not display the intermediate numeric values of the P matrix, and only display the structural parts of the intermediate matrices.
	\newline
	\newline
	The characteristic polynomial of $A$ is $(x-2)^4(x-3)^6$. During the algorithm steps, a check must be done that none of the standard basis vectors $\overrightarrow{e}_i$ nor the starting vector w of "all ones" are generalized eigenvectors.  Steps A.1 through A.5 proceed as follows:
	\newline
	\newline
	Let $\overrightarrow{w}=(1,1,\dots,1)$.
	\newline
	\newline	
	Calculate the max length Jordan Chain for eigenvalue 3: let $\overrightarrow{g}_0=(A-2 I)^4\overrightarrow{w}$ to remove $G_A(2)$ component from $\overrightarrow{w}$.  Theorem 5 tells us that $\overrightarrow{g}_0$ is in $G_A(3)$ and we can use it as the starting vector to calculate the Jordan Chain for eigenvalue 3.  Then $[\overrightarrow{g}_3, \overrightarrow{g}_2, \overrightarrow{g}_1, \overrightarrow{g}_0]=[(A-3 I)^3\overrightarrow{g}_0, (A-3 I)^2\overrightarrow{g}_0, (A-3 I)\overrightarrow{g}_0, \overrightarrow{g}_0]$ is our maximal length Jordan Chain for eigenvalue 3, since $(A-3 I)^4\overrightarrow{g}_0=\overrightarrow{0}$ and $(A-3 I)^3\overrightarrow{g}_0 \neq \overrightarrow{0}$.  Note that we will not know ahead of time how long the Jordan Chain will be.  Keep calculating in a loop until the result is the zero vector and then the eigen vector is the last non-zero vector calculated in the loop.
	\newline
	\newline
	Calculate the max length Jordan Chain for eigenvalue 2: let  $\overrightarrow{h}_0=(A-3 I)^4\overrightarrow{w}$ to remove $G_A(3)$ component from $\overrightarrow{w}$.  We only need an exponent of 4 rather than 6 because we now know (from the previous step) that the maximal length Jordan Chain of eigenvalue 3 is 4.  Then $[\overrightarrow{h}_2, \overrightarrow{h}_1, \overrightarrow{h}_0]=[(A-2 I)^2\overrightarrow{h}_0, (A-2 I)\overrightarrow{h}_0, \overrightarrow{h}_0]$ is our maximal length Jordan Chain for eigenvalue 2.
	\newline
	\newline
	We are now done with steps A.1 through A.5.  Move to the "B" steps.
	\newline
	\newline
	Let $J_1=\begin{pmatrix}
	3 & 1 & 0 & 0 & 0 & 0 & 0 & u_{1,8} & u_{1,9} & u_{1,10} \\
	0 & 3 & 1 & 0 & 0 & 0 & 0 & u_{2,8} & u_{2,9} & u_{2,10} \\
	0 & 0 & 3 & 1 & 0 & 0 & 0 & u_{3,8} & u_{3,9} & u_{3,10} \\
	0 & 0 & 0 & 3 & 0 & 0 & 0 & u_{4,8} & u_{4,9} & u_{4,10} \\
	0 & 0 & 0 & 0 & 2 & 1 & 0 & u_{5,8} & u_{5,9} & u_{5,10} \\
	0 & 0 & 0 & 0 & 0 & 2 & 1 & u_{6,8} & u_{6,9} & u_{6,10} \\
	0 & 0 & 0 & 0 & 0 & 0 & 2 & u_{7,8} & u_{7,9} & u_{7,10} \\
	0 & 0 & 0 & 0 & 0 & 0 & 0 & u_{8,8} & u_{8,9} & u_{8,10} \\
	0 & 0 & 0 & 0 & 0 & 0 & 0 & u_{9,8} & u_{9,9} & u_{9,10} \\
	0 & 0 & 0 & 0 & 0 & 0 & 0 & u_{10,8} & u_{10,9} & u_{10,10}
	\end{pmatrix}
	$
	\newline
	\newline
	Let $P_1=[\overrightarrow{g}_3,\overrightarrow{g}_2,\overrightarrow{g}_1,\overrightarrow{g}_0,\overrightarrow{h}_2,\overrightarrow{h}_1,\overrightarrow{h}_0,\overrightarrow{e}_8,\overrightarrow{e}_9,\overrightarrow{e}_{10}]$
	\newline
	Do LU factorization: $L_1U_1=P_1$.
	\newline
	Note that we have the equation $AP_1=P_1J_1$ and use the LU factors to solve for column vectors $\overrightarrow{u}_k$ via  $\overrightarrow{a}_k=P_1\overrightarrow{u}_k$ where $k$ is 8,9,10.  The equations for $\overrightarrow{u}_k$ hold because the right most column vectors of $P_1$ are standard basis vectors $e_k$.
	\newline
	The remaining Jordan blocks to be solved for have dimension 1 for eigenvalue 2 and either 2 or 1 for eigenvalue 3.  Therefore solve the smallest space first which is the 2-space.
	\newline
	Solve $ker(J_1 - 2I)$ to find the remaining eigenvector.  Be sure to chose a vector in the kernal which is linearly independent from the basis vector $\overrightarrow{h}_2$ already found above.  If we denote this vector as $\overrightarrow{c}$ then to get the eigenvector of A take $\overrightarrow{h}_3=P_1\overrightarrow{c}$.
	\newline
	Move to the next iteration (i=2).  Set the intermediate P matrix $P_2$ to be $P_1$ with the 8th column replaced by the eigenvector just obtained.  And set $ J_2 $ to be $J_1$ with the 8th column replaced with the corresponding vector of a Jordan Block.
	\newline
	Let $J_2=\begin{pmatrix}
	3 & 1 & 0 & 0 & 0 & 0 & 0 & 0 & u_{1,9} & u_{1,10} \\
	0 & 3 & 1 & 0 & 0 & 0 & 0 & 0 & u_{2,9} & u_{2,10} \\
	0 & 0 & 3 & 1 & 0 & 0 & 0 & 0 & u_{3,9} & u_{3,10} \\
	0 & 0 & 0 & 3 & 0 & 0 & 0 & 0 & u_{4,9} & u_{4,10} \\
	0 & 0 & 0 & 0 & 2 & 1 & 0 & 0 & u_{5,9} & u_{5,10} \\
	0 & 0 & 0 & 0 & 0 & 2 & 1 & 0 & u_{6,9} & u_{6,10} \\
	0 & 0 & 0 & 0 & 0 & 0 & 2 & 0 & u_{7,9} & u_{7,10} \\
	0 & 0 & 0 & 0 & 0 & 0 & 0 & 2 & u_{8,9} & u_{8,10} \\
	0 & 0 & 0 & 0 & 0 & 0 & 0 & 0 & u_{9,9} & u_{9,10} \\
	0 & 0 & 0 & 0 & 0 & 0 & 0 & 0 & u_{10,9} & u_{10,10}
	\end{pmatrix}
	$
	\newline
	\newline
	Let $P_2=[\overrightarrow{g}_3,\overrightarrow{g}_2,\overrightarrow{g}_1,\overrightarrow{g}_0,\overrightarrow{h}_2,\overrightarrow{h}_1,\overrightarrow{h}_0,\overrightarrow{h}_3,\overrightarrow{e}_9,\overrightarrow{e}_{10}]$.
	Incrementally update the $L_1, U_1$ factors, as discussed above, to obtain $L_2, U_2$ which incorporates new column vector $\overrightarrow{h}_3$.  Note that we're using $u_{i,j}$ only as variable names which are solved for new in each iteration.  I.e. don't confuse them with the $u_{i,j}$ column vectors found in previous iteration.  Solve for $u_{i,j}$ to complete the $J_2$ matrix.
	\newline
	Since we have found all the basis vectors in $G_A(2)$ then it is clear that the last two generalized eigenvectors to be found belong to $G_A(3)$.  Since we've already found one of them, then there are either two more eigenvectors or one eigenvector and one generalized eigenvector of eigenvalue 3 that remain to be found.  Thus compute $ker(J_2 - 3I)$ and determine if it has dimension 2 or 3.  Doing so we find that that the dimension of $ker(J_2 - 3I) = 2$, therefore there is only one eigenvector remaining and it's corresponding generalized eigenvector to be solved for.  Solve it via $ker(J_2 - 3I)^2$.  Find a nonzero vector in $ker(J_2 - 3I)^2$ which is linearly independent from $\overrightarrow{g}_3, \overrightarrow{g}_2, \overrightarrow{g}_1, \overrightarrow{g}_0$ and denote it as $\overrightarrow{y}_0$.  Then compute $\overrightarrow{y}_1=(J_2 - 3I)\overrightarrow{y}_0$.  Next take $\overrightarrow{z}_0=P_2 \overrightarrow{y}_0$ and $\overrightarrow{z}_1=P_2 \overrightarrow{y}_1$ to obtain the final two generalized eigen basis vectors. This will give us a complete basis of generalized eigenvectors.  With the last two vectors just found, create the final P matrix.  Then to obtain $P^{-1}$ incrementally update the LU factors $L_2$ and $U_2$ to get the LU factors of $P$.  From the LU factorization of $P$ we can quickly compute $P^{-1}$.  Then we will have
	\newline
	\newline
	$P^{-1}A P =
	\begin{pmatrix}
	3 & 1 & 0 & 0 & 0 & 0 & 0 & 0 & 0 & 0 \\
	0 & 3 & 1 & 0 & 0 & 0 & 0 & 0 & 0 & 0 \\
	0 & 0 & 3 & 1 & 0 & 0 & 0 & 0 & 0 & 0 \\
	0 & 0 & 0 & 3 & 0 & 0 & 0 & 0 & 0 & 0 \\
	0 & 0 & 0 & 0 & 2 & 1 & 0 & 0 & 0 & 0 \\
	0 & 0 & 0 & 0 & 0 & 2 & 1 & 0 & 0 & 0 \\
	0 & 0 & 0 & 0 & 0 & 0 & 2 & 0 & 0 & 0 \\
	0 & 0 & 0 & 0 & 0 & 0 & 0 & 2 & 0 & 0 \\
	0 & 0 & 0 & 0 & 0 & 0 & 0 & 0 & 3 & 1 \\
	0 & 0 & 0 & 0 & 0 & 0 & 0 & 0 & 0 & 3
	\end{pmatrix}
	$
	\newline
	\newline
	In summary for this particular matrix $A$ we were able to quickly calculate 7 out of 10 generalized eigen basis vectors without solving any equations or taking any matrix to a power.  All we needed was a sequence of $matrix \times vector$ calculations.  We only had to square a mostly sparse matrix once to find the last two basis vectors.  And we essentially only did one total LU factorization (done incrementally).  To find the remaining two non-maximal Jordan Chains, the number of calculations involved was greatly reduced by using intermediate mostly sparse partial Jordan block matrices and corresponding similarity matrix P rather than the original dense matrix A.

	\section{Acknowledgements} The author wishes to thank University of Texas at Arlington professor Michaela Vancliff for her help and guidance in putting together this article.
	
	\bibliographystyle{siamplain}
	\bibliography{jordPap}
	
\end{document}